\newtheorem{theorem}{Theorem}[section]
\newtheorem{lemma}[theorem]{Lemma}
\theoremstyle{definition}
\newcommand{\N}{\mathbb{N}}
\newcommand{\R}{\mathbb{R}}
\newcommand{\Q}{\mathbb{Q}}
\newcommand{\ve}{\varepsilon}
\title{The Uncountability of the Unit Interval}
\author{Christina Knapp and Cesar E. Silva}
\begin{document}
\maketitle

\section{Introduction}

For any particularly interesting theorem one proof is never enough.  Instead, the first proof sets the challenge to find a more elegant method that illuminates subtle features of the math, is simpler to understand, or even avoids using controversial subjects.    
In this paper we consider a subject that has  attracted the attention of many mathematicians: the uncountability of the real numbers in the unit interval.  We present the most exhaustive collection of proofs of this fact that we know.  These range from Cantor's three published proofs, including his famous diagonalization method, to 
more recent proofs that employ measure theory, game theory, algebra, and analysis.  

\section{Cantor's First Proof}

The first  proofs of the uncountability of the unit interval were given by Georg Cantor (1845--1918).  Cantor's research was motivated in part by his work in trying  to prove the continuum hypothesis, a conjecture about the different sizes of infinity in the real line which he first formulated at the end of his 1878  article \cite{Ca1878}, see  \cite[p. 879]{Ew1996}. If $A$  is
an infinite subset of the real numbers whose  cardinality 
is not that of the natural numbers $\N$, 
  the continuum hypothesis states that $A$ has the cardinality of the real numbers $\mathbb R$.  Although Cantor agonized over this open question for many years, he never successfully answered it---it is now known to be impossible to prove or disprove from the standard axioms of set theory \cite[p.118]{Da1990}, \cite{St2002}.  

Cantor defined two sets to be ``equivalent" if ``it is possible to put them, by some law, in such a relation to one other  that to every element of each one of them  corresponds  one and only one element of the other" (we cite here the translation of his 1895 article in \cite[p. 86]{Ca1952}, though this definition already appears at the start of Cantor's 1878 \cite{Ca1878} article, in which Cantor discusses the surprising fact that different dimension Euclidean spaces are equivalent; see \cite{Go2011} for a recent discussion of this proof ).  Today we call this type of relation a bijection.  We understand what Cantor called ``equivalent" to mean the sets are the same size, or have the same {\it cardinality}. 

The most obvious infinite set  is the set of natural numbers.  We call sets that  can be put into a bijection with the natural numbers {\it countably infinite} and understand these to be the smallest infinite sets.  
Sets that are finite (including the empty set) or countably infinite are said to be {\it countable}. If 
a set is neither finite nor countably infinite we call the set {\it uncountably infinite} or simply  {\it uncountable}.  Proving whether or not a set is countably infinite is equivalent to proving whether or not it is possible to index the set by the natural numbers.  

Cantor published three proofs that show the set of real numbers is uncountable.  The first, which we present below, appeared in an 1874 article  proving the existence of transcendental numbers. Liouville,
in 1844,  had already proved  that transcendental numbers exist, but Cantor's proof is different and he obtains, in a sense that can be made precise, that there are ``more'' transcendental numbers than algebraic numbers. 
  We have modified this proof from Cantor's original notation so as to make it more accessible to the reader (for translations of the original paper see \cite[pp. 620-622]{St2008} or \cite[pp. 839-843]{Ew1996}).  

\begin{theorem}[Cantor---1874, \cite{Ca1874}]\label{T:cantorfirst} If $\{\omega_k\}_{k\in\mathbb N}$ is a sequence of distinct real numbers, then for every interval of real numbers $[\alpha, \beta]$ ($\alpha<\beta$), there is at least one $\eta$ in $[\alpha, \beta]$ that  does not occur in the sequence
$\{\omega_k\}_{k\in\mathbb N}$.\end{theorem}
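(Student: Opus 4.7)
The plan is to build a nested sequence of closed subintervals of $[\alpha,\beta]$ whose endpoints are successively drawn from the sequence $\{\omega_k\}$, then extract a point $\eta$ in their intersection and verify it cannot equal any $\omega_k$.

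More concretely, I set $[\alpha_0,\beta_0]=[\alpha,\beta]$ and, having defined $[\alpha_n,\beta_n]$ with $\alpha_n<\beta_n$, I look at the open interval $(\alpha_n,\beta_n)$. If it contains at most one term of the sequence, the proof is over: I pick any point in $(\alpha_n,\beta_n)$ different from that term, and this point, being in $(\alpha_n,\beta_n)$, cannot equal any $\omega_k$ lying outside $[\alpha_n,\beta_n]$ either. Otherwise I let $\omega_{p_{n+1}}$ and $\omega_{q_{n+1}}$ be the two terms of smallest index that lie in $(\alpha_n,\beta_n)$, and I define $[\alpha_{n+1},\beta_{n+1}]$ to be the closed interval with these two numbers as endpoints (in the appropriate order). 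By construction, $[\alpha_{n+1},\beta_{n+1}]\subset(\alpha_n,\beta_n)$, and the two endpoints $\omega_{p_{n+1}},\omega_{q_{n+1}}$ are excluded from all strictly later intervals.

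Assuming the process never terminates, $(\alpha_n)$ is increasing and bounded above by $\beta$, while $(\beta_n)$ is decreasing and bounded below by $\alpha$, so they have limits $\alpha^\ast\le\beta^\ast$. I choose $\eta$ to be any point of $[\alpha^\ast,\beta^\ast]$ (for instance $\eta=\alpha^\ast$); then $\alpha_n\le\eta\le\beta_n$ for every $n$, so $\eta\in[\alpha,\beta]$. To finish, I must argue that $\eta\ne\omega_k$ for every $k$. Fix $k$, and suppose for contradiction that $\omega_k$ lies in $[\alpha_n,\beta_n]$ for infinitely many $n$. Since $[\alpha_{m+1},\beta_{m+1}]\subset(\alpha_m,\beta_m)$, actually $\omega_k$ would lie in $(\alpha_n,\beta_n)$ for infinitely many $n$. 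At each such step, the two indices $p_{n+1},q_{n+1}$ chosen are the two smallest among those $\omega$'s in the open interval; because we pick the smallest available indices and $\omega_k$ is always available, each step either selects $\omega_k$ (and thereby makes it an endpoint, excluding it from all subsequent intervals) or uses up two indices strictly smaller than $k$. There are only finitely many such smaller indices, so within finitely many steps $\omega_k$ is ejected from the nested intervals, contradicting our assumption.

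The main obstacle is the bookkeeping in this last step: one must be careful that the construction of picking the two smallest-indexed $\omega$'s really does guarantee that every $\omega_k$ is eventually excluded, and that the limit $\eta$ genuinely sits in every $[\alpha_n,\beta_n]$. Once the indexing argument above is made precise, the monotone convergence of $(\alpha_n)$ and $(\beta_n)$ and the trivial termination case take care of the remaining details.
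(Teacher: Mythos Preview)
Your proposal is correct and is essentially Cantor's original 1874 argument as presented in the paper: the same recursive choice of the two smallest-indexed terms inside the current open interval, the same appeal to monotone convergence for the limits $\alpha^\ast\le\beta^\ast$, and the same counting argument at the end. The paper phrases the last step slightly more directly---it observes that $\omega_p$ cannot lie in the interior of $I_p$ because $2p$ indices have been selected by that stage, all of which would have to be smaller than $p$---but your ``eventually ejected'' formulation is the same bookkeeping in contrapositive form.
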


\begin{proof}  
We will show that $\eta$ exists by defining a sequence of nested closed intervals $I_n=[a_n, b_n],$ where  $a_n $ and $b_n$ are elements in the sequence  $ \{\omega_k\}$, and choosing $\eta$  in the intersection of all the $I_n$.

Let $k_1$ be the first natural number such that $\omega_{k_1}$ is in the interval
$(\alpha, \beta)$. (If $k_1$ did not exist, any $\eta$ in $(\alpha,\beta)$ would satisfy the assertion of the 
theorem.) Similarly, let $k_2$ be the smallest natural number such that $k_2>k_1$ and
$\omega_{k_2}$ is in $(\alpha,\beta)$. Then define  
 \[a_1=\min\{\omega_{k_1}, \omega_{k_2}\}\text{ and }b_1=\max\{\omega_{k_1},\omega_{k_2}\}
 \text{ and set } I_1=[a_1,b_1].\]
   We next choose $\omega_{k_3},\omega_{k_4}$  to be the next-indexed elements in $\{\omega_k\}$  that are in  $(a_1, b_1)$ and set $a_2$ the smallest, $b_2$ the largest and let $I_2=[a_2,b_2]$.  We continue this process, so that if $I_{n-1}=[a_{n-1},b_{n-1}]$ has been defined, then $a_n $ and $b_n$ are the two next-indexed elements of $\{\omega_k\}$ such that $a_n<b_n$ and $a_n, b_n$ are in $(a_{n-1}, b_{n-1})$.   
If the sequence of intervals that is generated is finite, suppose $I_{K}$ is the last one. 
Then we can choose $\eta$ to be any element of $I_K$, completing the proof.  Now
we assume the sequence of intervals is infinite.

By  construction, the intervals are nested in the sense that \[I_1\supset I_2\supset\cdots\supset I_n\supset\cdots.\]
  Consider the sequences $\{a_k\}_{k\in\mathbb N}$ and $\{b_k\}_{k\in\mathbb N}$.  We see the first is strictly increasing and bounded above by any $b_n$ and the second is strictly decreasing and bounded below by any $a_n$.  Therefore their  limits exitst  and we may 
   define them as  \[a^*=\lim_{k\to\infty}a_k\text{ and }b^*=\lim_{k\to\infty}b_k.\]
 (A modern reader would justify the existence of these limits  by appealing to the monotone convergence theorem, a consequence of the Bolzano--Weierstrass theorem; Cantor
 states simply---in the translation of \cite{St2008}---that \lq\lq because they are strictly increasing in size without growing infinite, have a definite limit.") Since  $a_n<b_n$ for all $n$, then  $a^*\leq b^*$ and thus
  the interval $[a^*, b^*]$ is nonempty.  Therefore we can choose a number  $\eta$ in $[a^*, b^*]$.

Finally we prove that $\eta$ is not equal to any of the elements in the sequence
 $\{\omega_k\}$.  Suppose it were, say $\eta=\omega_p$, for some $p\geq 1$.  
 We observe  that $\omega_p$ cannot be in the interior of $I_{p}$. Before the endpoints
 of $I_{p}$ were chosen, there were $2(p-1)$ choices of points that were made from the sequence. 
 Therefore, if $\omega_p$ were in the interior of $I_{p}$ it would have to have been chosen already;
 but     $I_{p}$ is chosen so that the interval is disjoint from 
 all the previous choices. This shows that $\omega_p$ is not in the intersection of all the intervals.
 Therefore $\eta$ is a real number not in the sequence $\{\omega_k\}$, completing the proof. 
  \end{proof}

Cantor's proof depends in a fundamental way on the Bolzano--Weierstrass theorem, or its equivalent, the monotone convergence theorem;  according to Moore this theorem already appears in unpublished lecture notes of Weierstrass dating back to  1865, but it  first appeared in print in an article by Cantor published in 1872 \cite[p. 221]{Mo2008}.  Cantor uses   the Bolzano--Weierstrass theorem
to prove that a decreasing sequence of bounded closed intervals has a nonempty intersection.
Basically using the same idea, with the aid of the notions of supremum and infimum of a set, one
can obtain the following extension, which will be useful later:

\begin{theorem}\label{T:closedbounded}
Let $\{C_n\}_{n\in\mathbb N}$,  be a sequence of closed and bounded sets satisfying 
\[C_1\supset C_2\supset\cdots\supset C_n\supset\cdots.\]
Then their intersection $\bigcap_{n=1}^\infty C_n$ is nonempty.
\end{theorem}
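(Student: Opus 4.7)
The plan is to mimic the endpoint argument in Theorem~\ref{T:cantorfirst}, replacing the interval endpoints with suprema (or infima) of the sets $C_n$. Implicitly assume each $C_n$ is nonempty (otherwise the conclusion fails). Since $C_n$ is bounded, the quantity $a_n := \inf C_n$ exists in $\mathbb{R}$, and because $C_n$ is closed and bounded it actually attains its infimum, so $a_n \in C_n$.

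Next, I would use the nesting to get monotonicity. From $C_{n+1} \subset C_n$ we obtain $\inf C_n \leq \inf C_{n+1}$, so the sequence $\{a_n\}$ is non-decreasing. It is also bounded above by $\sup C_1$, which is finite since $C_1$ is bounded. The monotone convergence theorem (precisely the tool already invoked in Theorem~\ref{T:cantorfirst}) then yields the existence of a limit $a^* := \lim_{n\to\infty} a_n$.

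The final step is to show $a^* \in \bigcap_{n=1}^\infty C_n$. Fix $m \in \mathbb{N}$; for every $n \geq m$ the nesting gives $a_n \in C_n \subset C_m$, so the tail of $\{a_n\}$ lies entirely in $C_m$. Since $C_m$ is closed and $a_n \to a^*$, we conclude $a^* \in C_m$. As $m$ was arbitrary, $a^*$ lies in the intersection, which is therefore nonempty.

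The only real subtlety is the step $\inf C_n \in C_n$: this is precisely where both the closedness and the boundedness hypotheses are used, and it is the natural replacement for the fact that interval endpoints are themselves elements of the interval. Without closedness one sees at once that the conclusion can fail (take $C_n = (0,1/n)$), so the hypotheses cannot be weakened casually. Beyond that point the argument is a direct application of monotone convergence, exactly as in the previous theorem.
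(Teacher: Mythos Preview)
Your proof is correct and follows essentially the same approach as the paper: define $a_n=\inf C_n$, note that closedness and boundedness force $a_n\in C_n$, and then apply the monotone-limit argument of Theorem~\ref{T:cantorfirst}. The paper also introduces $b_n=\sup C_n$, but your single-sequence version together with the closedness-of-$C_m$ step already suffices and in fact spells out more detail than the paper's sketch.
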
 

\begin{proof}
Let $a_n=\inf C_n$ and $b_n=\sup C_n$.  Then $a_n$ and $b_n$ are in $C_n$ 
and a similar argument to that in the proof of Theorem~\ref{T:cantorfirst}  shows that
their intersection is nonempty.
\end{proof}

As Cantor observes,  Theorem~\ref{T:cantorfirst} shows that a nontrivial interval  cannot be mapped bijectively into any sequence, thus showing that the set of real numbers is uncountable.  Interestingly,  Cantor's paper starts with a proof that the set of algebraic numbers is countable, and then notes that Theorem~\ref{T:closedbounded} implies that every interval contains infinitely many transcendental numbers. It is important to note, as has  been  argued by Gray \cite{Gr1994}, that Cantor's theorem, in the case when the sequence 
$ \{\omega_k\}$ consists of the set of  algebraic numbers, provides a constructive way of obtaining a transcendental number; in fact Gray uses this proof to give an algorithm for producing a transcendental number  (we will come back to this after the diagonalization proof). 
The last paragraph in the proof of Theorem~\ref{T:cantorfirst}  proves that
when the sequence $ \{\omega_k\}$ is dense in the original interval, $a^*=b^*$, so in the case of the sequence of 
algebraic numbers, for example, $a_n$ converges to a transcendental number $\eta$.

In his letter to Dedekind of December 7, 1873, Cantor gives a more elaborate proof for constructing the number $\eta$ than the one presented in Theorem~\ref{T:cantorfirst}. Two days later, on December 9, Cantor writes to Dedekind that he has found the simpler proof that he published in 1874  and that is essentially reproduced above (see \cite[pp. 845-846]{Ew1996} for translations of the letters). The reader may refer 
to Gray \cite[p. 827]{Gr1994} for a discussion of his unpublished  proof.

We give another proof that uses  the nested intervals theorem and could be considered a simpler version of this proof. This is, in fact, the first proof presented in \cite{Ox1980}, where he also
modifies it to give another proof of the Baire category theorem \cite{Ox1980}.

\begin{proof}  For concreteness we assume that $[\alpha,\beta]$ is $[0,1]$. 
We construct a number $\eta$ in the intersection of a certain nested sequence of intervals
so that $\eta $ is different from all $\omega_k$.
Split the unit interval into the intervals $[0,\frac13], [\frac13,\frac23]$ and $[\frac23,1]$ and let $I_1$ be 
the interval that does not contain $\omega_1$ (the left-most one if there is more than one choice). Next split $I_1$ into three equal-length subintervals and choose the one that does not contain $\omega_2$. Continue in this way to generate
a sequence of nested closed intervals
\[I_1\supset I_2\supset\cdots\supset I_n\supset\cdots.\]
Then there is a point $\eta$ in the intersection of all these intervals (the point is in fact unique as the length of the intervals decreases to $0$). Finally we observe that $\eta$ is different from all the $\omega_n$. It is clearly different from $\omega_1$ as $\eta$ is in $I_2$, an interval which does not contain $\omega_1$.  Similarly we know that $\omega_n$ is not in $I_{n+1}$, so it cannot equal $\eta$. Thus we have constructed a point that is not in the enumeration. \end{proof}

We will see that the end of the first proof of Cantor's theorem is similar to the logic by which Borel arrives at a contradiction  in his proof of the Heine--Borel theorem (Theorem~\ref{specialHeineBorel}), see \cite[p. 225]{Bo1950}, whose first edition appeared in 1898.   Borel uses Theorem~\ref{specialHeineBorel} in his development of the theory of measure on the line.

\begin{theorem}[Heine--Borel---Special Case]\label{specialHeineBorel}  If 
$\{I_{n}\}_{n\geq 1}$ is 
a sequence of open intervals that 
covers a closed bounded interval $I=[a,b]$, then there exists a finite 
subsequence of the intervals 
$\{I_{n_{k}}\}_{k=1}^{K}$ that covers $I$. 
\end{theorem}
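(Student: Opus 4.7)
The plan is to argue by contradiction, using the bisection technique together with the nested intervals result of Theorem~\ref{T:closedbounded}. Suppose, for a contradiction, that no finite subfamily of $\{I_n\}_{n\geq 1}$ covers $I=[a,b]$. I would bisect $I$ into the two closed subintervals $[a,(a+b)/2]$ and $[(a+b)/2,b]$; at least one of these, call it $J_1$, also admits no finite subcover from $\{I_n\}$, because otherwise the union of two finite subcovers (one for each half) would be a finite subcover of $I$ itself. Iterating this step, I would bisect $J_{n-1}$ and select a closed half $J_n\subset J_{n-1}$ that likewise resists any finite subcover. This produces a nested sequence $J_1\supset J_2\supset\cdots$ of closed and bounded intervals with lengths $(b-a)/2^n$ tending to zero.

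By Theorem~\ref{T:closedbounded}, the intersection $\bigcap_{n\geq 1}J_n$ is nonempty, and the shrinking diameters force it to consist of a single point $\eta\in[a,b]$. Since $\{I_n\}$ covers $[a,b]$, there is an index $m$ with $\eta\in I_m$, and because $I_m$ is open, some $\delta>0$ satisfies $(\eta-\delta,\eta+\delta)\subset I_m$. Now I would pick $n$ large enough that $(b-a)/2^n<\delta$: writing $J_n=[c_n,d_n]$, the relations $c_n\leq\eta\leq d_n$ and $d_n-c_n<\delta$ yield $c_n>\eta-\delta$ and $d_n<\eta+\delta$, so $J_n\subset I_m$. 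But then $J_n$ is covered by the single interval $I_m$, which is a finite subcover, contradicting the construction of $J_n$.

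The main subtlety is the inductive step of the bisection: one must verify that non-coverability is inherited from $J_{n-1}$ to at least one of its two halves. This reduces to the elementary but essential observation that the union of two finite subfamilies of $\{I_n\}$ is again a finite subfamily, so if both halves could be covered finitely, then $J_{n-1}$ could too. Once this inheritance is in place, the remainder is the standard combination of the nested intervals theorem with the definition of openness, which parallels the closing paragraph of the proof of Theorem~\ref{T:cantorfirst}: the intersection point must belong to some $I_m$, yet the geometric construction has arranged for the small $J_n$ to avoid being covered by finitely many intervals, and in particular by this $I_m$.
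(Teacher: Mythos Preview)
Your bisection argument is correct and complete: the inheritance of ``no finite subcover'' to at least one half is exactly the point that makes the induction go through, and the final contradiction via Theorem~\ref{T:closedbounded} and openness of $I_m$ is clean.

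However, the paper proves this theorem by a genuinely different route, chosen deliberately to mirror Cantor's first proof (Theorem~\ref{T:cantorfirst}). Rather than bisecting, the paper builds a chain of intervals from left to right: let $I_{n_1}$ be the first interval in the enumeration containing $a$, then let $I_{n_2}$ be the first interval containing the right endpoint $b_{n_1}$, and so on. If this process terminates, it produces a finite subcover; if not, the right endpoints $b_{n_k}$ form an increasing bounded sequence with limit $b^*\leq b$, and the interval $I_p$ covering $b^*$ should have been selected at infinitely many stages---contradicting that only finitely many indices precede $p$. The advantage of the paper's approach is thematic: it makes explicit the structural parallel with Cantor's 1874 argument (selection by smallest index, monotone limit, contradiction via index position), which is the narrative point of that section. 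Your bisection proof is the more common textbook argument and is arguably simpler to verify, but it obscures that parallel; the remark in your last paragraph linking your argument to the closing of Theorem~\ref{T:cantorfirst} is a bit strained, since the mechanisms are really quite different.
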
 

\begin{proof}  We may assume the open intervals are bounded by writing each unbounded interval as a countable union of bounded open intervals. Write $I_{n} = (a_{n},b_{n})$  for $n\geq 1$. We describe an 
algorithm that will produce a finite subcover. As in the proof of Theorem~\ref{T:cantorfirst}, the ordering of 
the intervals according to their index plays an important role.  We start by choosing a sequence of intervals according to their index. Let $n_1$ be the first natural number such that $a$ is in $I_{n_1}$ and rename this interval   $J_1$.
 If $J_1$ covers $I$, or equivalently if $b$ is in $J_1$, we are done; if not pick the first interval containing 
the right endpoint of $J_1$. Continue in this way. Again as before, this process generates a finite or infinite sequence. If finite we are done. We will show that if the sequence is infinite we arrive at a contradiction.

The integer ${n_{1}}$ has already been chosen  such that 
$a\in (a_{n_{1}},b_{n_{1}})$.  There   exists a smallest integer $n_{2}$ such that 
$b_{n_{1}}\in (a_{n_{2}},b_{n_{2}})$.  In this way we  generate  an infinite sequence 
$n_{1}, n_{2}, \dots$, with $b_{n_k} < b_{n_{k+1}}\leq b$ for all $k\geq 1$.  
This is an increasing   sequence bounded by $b$, so we can   
set \[b^{*}= \lim_{k\to\infty} b_{n_{k}}.\]
 We know $b^{*}\leq b$.  So there 
exists an interval $I_{p}$ such that $b^{*}\in I_{p}$.  Write 
$I_{p}=(a_{p},b_{p})$. We note that $I_p$  is not one of the intervals $I_{n_{k}}$ as 
$b_{n_{k}}\leq b^* < 
b_{p}$ for all $k\geq 1$.

 The contradiction is obtained by noting that $I_p$ is a candidate to be chosen 
 infinitely many times.  Since $b^*$ is in $I_p$ there exists infinitely many $k$ such 
 that $b_{n_k}$ is in $I_p$. Then $I_p$ is a candidate when we choose the smallest 
 integer $n_{k+1}$ such that $b_{n_k}$ is in $I_{n_{k+1}}$.  As 
 $I_p$ was not chosen this implies that infinitely many integers $n_k$ were chosen before $p$, 
 a contradiction. \end{proof} 

It is interesting that Cantor's and Borel's proofs are similar both in their construction of the sequence of intervals $I_n$ and $I_{n_k}$, respectively, and in their process of arriving at a contradiction.  Cantor constructs $I_n$ by choosing  endpoints $a_n,  b_n$ to be the next-indexed elements of the sequence $\{\omega_k\}$ that are in the interior of the previous interval.  Borel chooses his interval to be the next-indexed in the sequence that covers the right endpoint of the previous interval.  Thus both $I_n$ and $I_{n_k}$ are defined recursively by making use of the ordering of the original sequence.  Both proofs arrive at a contradiction by showing that if the sequences  were infinite, it would have to follow that there was an element of the sequences $\{\omega_k\}$ and $\{I_n\}$ that had infinitely many elements preceding it.  This contradicts the fact that the sequences are already ordered by their indices, thus whenever we pick out a specific element $\omega_k$ or $I_k$ we know only $k-1$ elements can come before it. 

Theorem~\ref{specialHeineBorel} is also true when the open cover is not necessarily countable.
In fact, let $\{I_x\}$, for $x$ in an arbitrary index set $\mathcal I$, consist of  a
collection of open intervals covering $[a,b]$, i.e., each $I_x$ is an open interval and each element of $[a,b]$ is in $I_x$ for some
$x$ in $\mathcal I$.
We observe that there exits a countable set $\mathcal I_0\subset\mathcal I$ such that 
the countable subcollection $\{I_x\}_{x\mathcal I_0}$ still covers $[a,b]$. 
In fact, each $I_x$ is a union of open intervals of the form 
$J_{k,n} = (q_k-1/n,q_k+1/n)$ for some  rational numbers $q_k$ in $[a,b]$ and positive integers $n$. Since the  collection of all intervals of the form 
$J_{k,n}$ also forms a cover of $[a,b]$, for each $(k,n)$   
choose  an $I_x$ that contains $J_{k,n}$,  and  rename it $I_{k,n}$; this determines a countable subcover $\{I_{k,n}\}$ of the $\{I_x\}$.

 We observe now that   the Bolzano--Weierstrass theorem
can be obtained as a direct consequence of the Heine--Borel theorem, as extended above
to arbitrary open covers.

\begin{theorem}[Bolzano--Weierstrass]\label{T:Bolzano-Weierstrass}  If $\{a_n\}$ is
a bounded sequence of real numbers, then it has a subsequence that converges.
\end{theorem}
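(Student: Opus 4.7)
The plan is to prove Bolzano--Weierstrass by contradiction, using the extended Heine--Borel theorem (allowing arbitrary open covers) that the authors just observed. First I would note that since the sequence $\{a_n\}$ is bounded, it is contained in some closed bounded interval $[a,b]$. Suppose, for contradiction, that no subsequence of $\{a_n\}$ converges. Then in particular no subsequence converges to any point $x\in[a,b]$, which means that no $x\in[a,b]$ is an accumulation point of $\{a_n\}$.

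The key step is translating this failure-to-accumulate into a local statement suitable for an open cover. For each $x\in[a,b]$, since $x$ is not an accumulation point of the sequence, there exists an open interval $I_x$ centered at $x$ such that $I_x$ contains only finitely many terms of the sequence (if every neighborhood of $x$ contained infinitely many terms, a routine diagonal selection would produce a subsequence converging to $x$). The collection $\{I_x\}_{x\in[a,b]}$ is then an arbitrary open cover of the closed bounded interval $[a,b]$.

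Next I would invoke the extended Heine--Borel theorem from the discussion immediately preceding the statement: from $\{I_x\}_{x\in[a,b]}$ we can extract a finite subcover $I_{x_1},\ldots,I_{x_N}$ of $[a,b]$. By construction each $I_{x_j}$ contains only finitely many terms of $\{a_n\}$, hence their union also contains only finitely many terms. But the union covers $[a,b]$, and $[a,b]$ contains every term $a_n$, which yields infinitely many terms inside the union. This is the desired contradiction, so some subsequence must converge.

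The main obstacle, and the only place where real care is needed, is justifying the local statement that non-accumulation at $x$ yields a neighborhood with only finitely many $a_n$. One must be careful that ``finitely many terms'' means finitely many indices $n$, not finitely many distinct values; if one tried to use ``neighborhood missing $x$'' or something similar the argument would fail. Everything else is essentially bookkeeping around the Heine--Borel application.
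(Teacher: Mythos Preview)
Your proposal is correct and follows essentially the same approach as the paper: assume no convergent subsequence, deduce that each point of $[a,b]$ has a neighborhood containing only finitely many terms (indices) of the sequence, apply the extended Heine--Borel theorem to extract a finite subcover, and obtain a contradiction with the infinitude of the sequence. Your explicit remark that ``finitely many terms'' must mean finitely many indices is a welcome clarification that the paper leaves implicit.
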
 

\begin{proof}  Suppose  that the sequence  is contained in the interval $[a,b]$. If there is no convergent subsequence,  
then
 for each $x$ in $[a,b]$, there exists $\varepsilon_x>0$ such that  the interval $J_x= (x-\varepsilon_x, x+\varepsilon_x)$ 
contains at most finitely many terms of the sequence, i.e., there are only finitely many
integers $n$ such that $a_n$ is in $J_x$. 
As 
$\{J_x\}_{x\in [a,b]}$,  is a open cover of   $[a,b]$ by open intervals,  by  Theorem~\ref{specialHeineBorel} (Heine--Borel) and the remark following, there is a finite subcover
$J_{x_1}, \ldots J_{x_k}$ of $[a,b]$. As each one contains only finitely many terms of the sequence  this is a contradiction since the sequence has  infinitly many terms. Therefore there is a convergent subsequence. 
\end{proof}

\section{Cantor's Second Proof}


Cantor's second proof, published in 1884, uses the notion of a {\it perfect} set, which he had defined in 1883 as a set equal to its set of accumulation points \cite[p. 223]{Mo2008}; or equivalently, a set that is closed and includes its set of accumulation points. Recall that a point $x$ is an accumulation point of a set $A$ if
there is a sequence of distinct elements of $A$ that converges to $x$.  This  paper is also the first to give the definition of closed sets 
\cite[p. 223]{Mo2008}.

In his proof, to construct a point that is not in a given sequence, Cantor uses Cauchy sequences, which he called fundamental sequences. The proof can be simplified, however, using Theorem~\ref{T:closedbounded}, which we do here.  For a proof using Cauchy sequences   see \cite{Fr2010}.
Although Cantor states his theorem for subsets of $\R^n$,  we  continue to treat only subsets of the real line. 


\begin{theorem}[Cantor\thinspace--\thinspace 1884, \cite{Ca1884}]\label{T:cantorsecond} A countable nonempty set of real numbers cannot be perfect.  \end{theorem}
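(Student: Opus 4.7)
The plan is to assume for contradiction that $P = \{\omega_k\}_{k \in \N}$ is a countable nonempty perfect subset of $\R$, and then to construct a nested sequence of nonempty closed bounded sets $C_n \subset P$ with $\omega_n \notin C_n$ for every $n$. Applying Theorem~\ref{T:closedbounded} to the $C_n$ will produce a point of $\bigcap_n C_n \subset P$ distinct from every $\omega_n$, contradicting the assumption that the $\omega_k$ exhaust $P$.

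The construction is inductive. First I would build a nested sequence of closed bounded intervals $I_n = [a_n, b_n]$ with $a_n < b_n$ satisfying (i) $\omega_n \notin I_n$, (ii) $I_n \subset I_{n-1}$ for $n \geq 2$, and (iii) the interior of $I_n$ contains some point of $P$. For the base case, since $P$ is perfect and nonempty, every point of $P$ is an accumulation point, so $P$ contains at least two points; hence I can pick $p \in P \setminus \{\omega_1\}$ and take $I_1$ to be a small closed interval around $p$ whose closure misses $\omega_1$. For the inductive step, suppose $I_{n-1}$ has already been built with some point $q$ of $P$ in its interior. Because $q$ is an accumulation point of $P$, the interior of $I_{n-1}$ meets $P$ in infinitely many points, so I can select one such point $q' \neq \omega_n$. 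I then take $I_n$ to be a closed interval centered at $q'$ that lies in the interior of $I_{n-1}$ and whose closure does not contain $\omega_n$.

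Next I would set $C_n = I_n \cap P$. Since $P$ is perfect it is closed, so each $C_n$ is closed; it is bounded since $I_n$ is; it is nonempty since $q' \in C_n$; and the sequence is nested because $I_{n+1} \subset I_n$. Theorem~\ref{T:closedbounded} then gives a point $\eta \in \bigcap_n C_n$. Thus $\eta \in P$, so $\eta = \omega_m$ for some $m$; but $\eta \in C_m \subset I_m$ while by construction $\omega_m \notin I_m$, which is a contradiction.

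The main obstacle is maintaining the inductive invariant (iii): at every stage I must simultaneously avoid $\omega_n$ and still keep a point of $P$ in the interior of $I_n$. The perfectness hypothesis is exactly what enables this — because every point of $P$ is an accumulation point of $P$, removing the single obstructing point $\omega_n$ from a neighborhood of a point of $P$ never exhausts the supply of nearby points of $P$ from which to pick the new center $q'$.
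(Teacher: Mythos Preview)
Your proposal is correct and follows essentially the same approach as the paper: both construct a nested sequence of intervals each containing a point of $P$ in its interior while excluding $\omega_n$, set $C_n$ equal to the intersection of the $n$th interval with $P$, and invoke Theorem~\ref{T:closedbounded}. The only cosmetic differences are that the paper uses open balls $B_n$ (then takes closures) and starts $B_1$ centered at $\omega_1$, whereas you work directly with closed intervals and exclude $\omega_1$ already at the base step; if anything, your version handles the case $n=1$ more cleanly.
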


\begin{proof}  
Let $P$ be a countable nonempty set, so we can write $P=\{\omega_n\}_{n\in\mathbb N}$. We will show that if we assume that $P$ is perfect, then there is a point $p'$ in $P$ that is different from $\omega_n$ for all $n$ in $\mathbb N$, which is a contradiction, showing  $P$ could not be perfect.


Let $\varepsilon _1>0$  and consider the interval $B_1=(\omega_1-\varepsilon _1,\omega_1+\varepsilon _1)$; we say $B_1$ is centered at $\omega_1$ and of radius $\varepsilon_1$. As $P$ is perfect, $B_1$ contains infinitely many points of $P$. Then we can choose an open interval $B_2$ 
 centered at a point of $P$ that is in $B_1$, and of radius $\varepsilon_2>0$ such
that $B_2$ does not contain $\omega_2$ and its closure $\overline{B_2}$ is included in $B_1$.  Again we know that $B_2$ contains infinitely many points of $P$. Choose an open interval $B_3$ centered at a point of $P$ that is in $B_2$
and with radius such that $B_3$ does not contain $\omega_3$ and its closure is included in $B_2$.

In this way we generate  a sequence of open intervals $\{B_n\}_{n\in\mathbb N}$ such that
$B_n$ contains a point of $P$, \[\overline{B_n}\subset B_{n+1},\] and $B_n$ does not contain $\omega_n$.

Let 
\[C_n=\overline B_n\cap P.\]
Then the sets $C_n$ are closed and bounded, and decreasing: $C_n\supset C_{n+1}$.
Therefore there exists a point $p'$ in their intersection $\bigcap_{n\geq 1}C_n$. The point $p'$ is in 
$P$ and since it is in $B_n$ for all $n\geq 1$, it is different from $\omega_n$. Therefore $p'$ satisfies 
the properties we were seeking, completing the proof.

 \end{proof}

It follows that the unit interval is uncountable because $[0,1]$  is a perfect subset of the real numbers.\\

The proof of Theorem \ref{T:cantorsecond} uses a similar method as that of Theorem \ref{T:cantorfirst}.  We again construct a sequence of nested closed intervals  and examine the intersection of these intervals to find the element of $P$ that is not in the sequence $\{\omega_k\}$.  But where in the first proof we constructed the $I_n$ by choosing endpoints that are elements of the sequence, here we choose the $B_n$ to exclude all $\omega_i \in \{\omega_k\}, i\leq n$.  


Interestingly, the proof of the Baire category theorem follows almost exactly the same strategy as Theorem \ref{T:cantorsecond}.  The Baire category theorem states that ``if $X$ is a complete metric space, then the intersection of any countable collection of dense open sets in $X$ is dense."  Ren\'e Louis Baire (1874--1932) proved this theorem in his doctoral thesis in 1899.    Instead of using  complete metric spaces we prove   the theorem for  closed subsets of $\mathbb R$,  as a subset of the line is complete (for the Euclidean metric) if and only if it is closed.  For a proof of the general theorem see \cite{Si2008}.

\begin{theorem} [Baire category\thinspace---\thinspace Special Case]\label{T:baire} If $F$ is a closed subset of $\mathbb R$,  then the intersection of any countable collection of dense open sets in $F$ is dense in $F$. \end{theorem}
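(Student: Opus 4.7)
The plan is to imitate the argument of Theorem~\ref{T:cantorsecond}: rather than excluding the terms $\omega_n$ one at a time, I would shrink down into the $n$th dense open set $U_n$ at stage $n$, and then invoke Theorem~\ref{T:closedbounded} to extract a point in the resulting intersection. Let $\{U_n\}_{n\in\mathbb{N}}$ denote a countable collection of open dense subsets of $F$ (in the relative topology). To show that $\bigcap_{n} U_n$ is dense in $F$, I would fix an arbitrary nonempty relatively open subset $V$ of $F$ and aim to exhibit a point of $V$ lying in every $U_n$.

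The construction proceeds inductively. Since $U_1$ is dense and open in $F$, the set $V\cap U_1$ is nonempty and relatively open in $F$; I would pick a point $p_1\in V\cap U_1$ and a small open interval $B_1=(p_1-\varepsilon_1,p_1+\varepsilon_1)$ with $\varepsilon_1>0$ chosen so that $\overline{B_1}\cap F\subset V\cap U_1$. Given $B_{n-1}$, the density of $U_n$ in $F$ together with $B_{n-1}\cap F\neq\emptyset$ yields a point $p_n\in B_{n-1}\cap F\cap U_n$, and I would then choose $\varepsilon_n>0$ small enough that the open interval $B_n=(p_n-\varepsilon_n,p_n+\varepsilon_n)$ satisfies $\overline{B_n}\subset B_{n-1}$ and $\overline{B_n}\cap F\subset U_n$. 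This is feasible because $U_n$ is open in $F$, hence of the form $W_n\cap F$ for some open $W_n\subset\mathbb{R}$, so any sufficiently small interval around $p_n$ has its closure inside $W_n\cap B_{n-1}$.

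Setting $C_n=\overline{B_n}\cap F$, I would then have a decreasing sequence of nonempty closed and bounded sets (each contains $p_n$). By Theorem~\ref{T:closedbounded}, the intersection $\bigcap_{n\geq 1}C_n$ is nonempty, and any point $p^*$ in this intersection lies in $\overline{B_1}\cap F\subset V$ and in $\overline{B_n}\cap F\subset U_n$ for every $n\geq 1$, so $p^*\in V\cap\bigcap_n U_n$. This shows that $\bigcap_n U_n$ meets every nonempty relatively open subset of $F$, establishing density.

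The main obstacle, modest though it is, will be bookkeeping about the topology on $F$: the sets $U_n$ are open only relative to $F$, so when I choose the interval $B_n$ I must ensure that $\overline{B_n}\cap F$, not the whole closed interval $\overline{B_n}$, is forced to lie in $U_n$. Writing $U_n$ as $W_n\cap F$ with $W_n$ open in $\mathbb{R}$ will make this transparent and reduce the construction to exactly the nested-interval argument used in Theorem~\ref{T:cantorsecond}.
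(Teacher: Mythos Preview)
Your proposal is correct and follows essentially the same route as the paper: pick a point in the first dense open set, shrink to a small ball whose closure (intersected with $F$) stays inside it, iterate, and apply the nested closed bounded sets result (Theorem~\ref{T:closedbounded}) to extract a point in the intersection. Your bookkeeping with $C_n=\overline{B_n}\cap F$ and the explicit use of $U_n=W_n\cap F$ is, if anything, a bit more careful about the relative topology than the paper's version, but the argument is the same.
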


\begin{proof}
Let $\{G_n\}_{n\in\mathbb N}$ be a countable collection of dense open sets in $F$.  Open in $F$ means that each $G_n$ is of the form $G_n'\cap F$ for some open set $G_n'$  in $\R$. To show that the intersection of the 
$G_n$ is dense in $F$ we need to show that if  $B$ be any nonempty open ball in $F$ (i.e., a set 
of the form $B=\{x\in F: |x-x_0|<\varepsilon_0\}$ for some $x_0$ in $F$ and some $ \varepsilon_0>0$), then
there exists a point $\eta$ in $B$ that is in $G_n$ for all $n $ in $\mathbb N$. 

Since  $G_1$ is dense,  there exists a point  $x_1 $ in  $ G_1 \cap B$.  Construct an open ball $B_1$  centered at $x_1$ with radius $\varepsilon_1<1$  such that
\[\overline B_1 \subset B\cap G_1,\]  
 (for example, choose $\ve_1=\ve_0/2$).  Similarly, since $G_2$ is dense, the set $G_2\cap B_1$ 
contains a point $x_2$.  Construct an open ball $B_2$   centered at $x_2$ with radius $\varepsilon_2<\frac{\ve_1}{2}$ and such that the closure of $B_2$ is contained in $B_1\cap G_2$.  Continue on to get the sequence of balls $B_n$ centered at a point  $x_n$ of $G_n\cap B_{n-1}$ with radius $\varepsilon_n<\frac{\ve_{n-1}}{n}$ such that
 \[\overline B_n \subset B_{n-1}\cap G_n.\]  

The standard proof now proceeds to show that the sequence $\{x_n\}$ is a Cauchy sequence,
and as the space is assumed complete, it must converge to a point we denote $\eta$. In our case we will simplify the argument by using the additional structure of $\R$. The sets $\overline B_n$ are compact  
and nested
\[\overline B_n\supset\overline B_{n+1}.\] 
Therefore there is a point $\eta$ in their intersection.
This point must be in $B$, so the intersection $ \bigcap_{n=1}^\infty G_n \cap B$ is nonempty, completing the proof.


\end{proof}

Now suppose the interval $[0,1]$ could be written as $[0,1]=\bigcup_{n\in\mathbb N} \{\omega_n\}$.
Taking complements we obtain that $\bigcap_{n\in\mathbb N} ([0,1]\setminus\{\omega_n\})$ is the empty set.
But the sets $[0,1]\setminus\{\omega_n\}$ are clearly open and dense in $[0,1]$, 
and the Baire category theorem implies that their intersection is dense in $[0,1]$, so certainly not empty.
Thus the unit interval is uncountable.
More generally, this theorem can be equivalently stated as, ``$F$ cannot be represented by  a countable union of nowhere dense sets." (A set is nowhere dense if the interior of its closure is empty.)   

The Baire category theorem has had many important applications in analysis. 
It is used as a tool for proving existence results. Sets that are countable unions of nowhere dense sets
are regarded as \lq\lq small" and are called sets of {\it first category}. The complement of a first category set
is considered ``large" and we say elements of this set are {\it typical}. Proofs  that use the Baire category method not only show the existence of an element but show the typical behavior. For example, while the existence of nowhere differentiable functions was known following the example of Weierstrass, Banach proved in 1931 that the typical continuous function is nowhere differentiable (i.e., the complement of the  set of functions that are nowhere differentiable is a set of first category, in the set of continuous functions on the unit interval with the uniform metric). For a proof of this theorem and other applications of the Baire category method the reader may refer to \cite{Ox1980}.

\section{Cantor's Third Proof}

Cantor's third and most famous proof, published in 1891, introduces his  ``diagonalization" method and is only four pages long. Cantor starts by referring to his 1874 paper where he proved that the real numbers cannot be put in a one-to-one correspondence with the natural numbers, and then proceeds to say that
``it is possible to give a proof of that theorem without considering the irrational numbers" \cite{Ca1891}   (we have followed the translation in \cite[pp. 920-922]{Ew1996}). He goes on to prove that the set of all infinite sequences on two symbols (Cantor uses $m$ and $w$ for the symbols) is not countable. We  now introduce some 
notation. Think of  $\{0,1\}^\N$ standing for the set of all functions
from $\{0,1\}$ to $\N$; each element $a$ of $\{0,1\}^\N$ can be thought of as an infinite sequence consisting 
of $0$s and $1$s.

\begin{theorem}[Cantor\thinspace--\thinspace 1891, \cite{Ca1891}]\label{T:cantorthird} The set  $\{0,1\}^\N$   is uncountable. \end{theorem}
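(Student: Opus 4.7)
The plan is to argue by contradiction using Cantor's diagonal construction, which is the whole point of this theorem. Suppose that $\{0,1\}^\N$ were countable; then I could enumerate all of its elements as a sequence $a_1, a_2, a_3, \ldots$, where each $a_n$ is itself a function $a_n\colon \N\to \{0,1\}$, i.e., an infinite binary sequence whose $k$th term I write as $a_n(k)$.

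The key step is to manufacture a new element $b\in\{0,1\}^\N$ that differs from every $a_n$ in a controlled way. I would define $b\colon\N\to\{0,1\}$ by the rule
\[
b(k) = 1 - a_k(k)
\]
for each $k\in\N$, i.e., flip the $k$th bit of the $k$th enumerated sequence. This is well-defined since $a_k(k)\in\{0,1\}$ forces $b(k)\in\{0,1\}$, so $b$ genuinely belongs to $\{0,1\}^\N$.

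Finally I would show that $b$ cannot equal any $a_n$ in the enumeration: if $b=a_n$ for some $n$, then in particular $b(n)=a_n(n)$, but by construction $b(n) = 1-a_n(n)\ne a_n(n)$, a contradiction. Hence no enumeration of $\{0,1\}^\N$ exists, and the set is uncountable.

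There is no real obstacle here; the diagonal element $b$ is given by an explicit formula and the verification is immediate. The only thing worth being careful about is the framing, namely emphasizing that the list $a_1,a_2,\ldots$ is an arbitrary purported bijection $\N\to\{0,1\}^\N$ so that the contradiction rules out the existence of any such bijection, not merely the particular one we started with.
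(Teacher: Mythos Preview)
Your proof is correct and is essentially identical to the paper's own argument: both assume a surjection from $\N$ onto $\{0,1\}^\N$, define the diagonal element by flipping the $k$th bit of the $k$th listed sequence, and derive a contradiction. The only cosmetic difference is that the paper writes the bit-flip as a two-case definition while you use the formula $b(k)=1-a_k(k)$.
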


\begin{proof}
Suppose $\{0,1\}^\N$ were countable.  Then there would  exist a surjection \[f:\mathbb N \to \{0,1\}^\N\]giving  an
enumeration or listing of $\{0,1\}^\N$. For each $n$ in $\N$, we have an element of $2^\N$ denoted
$f(n)$. The value of the function $f(n)$ at a natural number $i$ is denoted $f(n)(i)$. We now define a new
element of $2^\N$. Define $b:\N\to\{0,1\}$ such that 

\begin{align*}
b(i)= 
\begin{cases}
1, \ & \text{if}\   f(i)(i)=0;\\
0, \ &\text{if}\    f(i)(i)=1.
\end{cases}
\end{align*}
Clearly $b$ is in $\{0,1\}^\N$, so there must exits $k$ in $\N$ such that $f(k)=b$. We observe that if
$f(k)(k)=0$, then $b(k)=1$, and if $f(k)(k)=1$, then $b(k)=0$. Each case is a contradiction. Therefore there is no $k$ with
$f(k)=b$, contradicting that $f$ is a surjection. It follows that $\{0,1\}^\N$ cannot be countable.
\end{proof}

Cantor does not give the details of how Theorem~\ref{T:cantorthird} implies that $[0,1]$ is uncountable, 
but one can provide such a proof by indentifying points in $[0,1]$ with their binary  expansions 
as infinite sequences of $0$s and $1$s. One needs to take some care as some points can have two such 
representations.


To obtain transcendental numbers using Cantor's diagonalization method one lists all algebraic numbers in binary expansion in the unit interval say, listing both representations for those numbers
that have two such expansions (the dyadic rationals, i.e., those of the form $n/2^k$). Then the diagonalization method gives the binary expansion of a transcendental number. Gray shows that this
gives an algorithm that is more efficient than the algorithm resulting from Cantor's first proof. In addition,
Gray proves that by this method all transcendentals in $(0,1)$ are obtained \cite[Theorem 3]{Gr1994}. 
In this context one can ask for a listing of all the rationals in $[0,1]$ so that the  algorithm yields
a quadratic irrational, for example. For recent related results the reader may consult \cite{Me2007}.

The diagonalization technique has been used  in many important theorems. One application we mention is its use in  proving that
the halting problem (i.e., whether an abstract computer halts on a giving input) is undecidable. 

Cantor's 1891 paper may contain the first appearance of symbolic spaces, such as the space of
all infinite sequences of $0$s and $1$s. Did Cantor  know that the space $\{0,1\}^\N$ is topologically equivalent (homeomorphic) to the middle thirds Cantor set (see section~\ref{s:measure})? There is a natural metric defined on  $\{0,1\}^\N$. If $x$ and $y$ are  in $\{0,1\}^\N$,with $x\neq y$, let $I(x,y)$ be the first index $i\in\N$ such that 
$x(i)$ and $y(i)$ are different and set \[d(x,y)=\frac{1}{2^I(x,y)},\] and write $d(x,x)=0$.
So points get closer the more they agree from the start. With this metric $\{0,1\}^\N$ is a compact metric space that is perfect \cite[p. 124]{Si2008}, and where points are both open and closed, hence the space satisfies the property of being totally disconnected; it is known that these  properties  characterize a Cantor set, i.e., any two spaces satisfying them are topologically equivalent \cite[p.  216]{Wi1970}.

Cantor's diagonalization method can be very helpful in proving the uncountability of many sets.  The  examples above are by no means the only ones.   Raja has published another proof that uses the diagonalization argument but does not require the negation operation \cite{Ra2005}.

\section{Power Sets}

In his 1891 paper, Cantor also states that his diagonalization proof can be extended 
to prove that ``for a given manifold L we can produce a manifold M whose power is greater than that of L'' 
\cite[p. 922]{Ew1996}, and he goes on to prove by the diagonalization method that there is no surjection from the unit interval to the set of all functions on the unit interval with values in $\{0,1\}$, which can
be identified with the set of all subsets of the unit interval. The {\it power set}, denoted $\mathcal{P}(X)$, of a set $X$ is the set of all subsets of $X$.

Another way to arrive at the uncountability of the unit interval is through power sets.  If one examines
the proof of Theorem~\ref{T:cantorthird} using the natural identification of subsets of $\N$ with sequences of $0$s and $1$s that assigns $i$ to the subset if and only if the $i$th element of the sequence is $1$, ones can see that the subset of $\N$ that is represented by the sequence  $\alpha$  is the set of all elements $i$ in $\N$ such that $i$ is not in $f(i)$. This suggests the proof of the following theorem.

\begin{theorem}\label{T:powercard} (Cantor) The cardinality of $\mathcal{P}(X)$ is strictly greater than the cardinality of $X$. \end{theorem}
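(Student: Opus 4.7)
The plan is to prove two things: (i) there is an injection $X \hookrightarrow \mathcal{P}(X)$, and (ii) there is no surjection $X \to \mathcal{P}(X)$. Together these give that $\mathcal{P}(X)$ has strictly greater cardinality than $X$, in the sense of the definition of equivalence recalled in the introduction.

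For (i) the assignment $x \mapsto \{x\}$ works immediately: distinct elements of $X$ are sent to distinct singletons, so this is an injection into $\mathcal{P}(X)$. The real content is (ii), and here I would adapt the diagonalization argument of Theorem~\ref{T:cantorthird}, precisely as the paragraph preceding the statement suggests. Assume for contradiction that $f : X \to \mathcal{P}(X)$ is a surjection, and, mirroring the sequence $b$ built in the proof of Theorem~\ref{T:cantorthird} (which was defined to disagree with $f(i)$ at the $i$th coordinate), define the \emph{diagonal subset}
$$D = \{\, x \in X : x \notin f(x)\,\} \in \mathcal{P}(X).$$
By surjectivity there exists $y \in X$ with $f(y) = D$. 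Asking whether $y \in D$ produces the contradiction: by the definition of $D$, we have $y \in D$ if and only if $y \notin f(y)$, and since $f(y) = D$ this becomes $y \in D$ if and only if $y \notin D$. Hence no such surjection can exist.

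The main obstacle is conceptual rather than technical: one must notice that the diagonalization used in Theorem~\ref{T:cantorthird} did not actually depend on being able to list the coordinates by $\N$; what played the role of "index" there can equally well be played by $X$ itself, since each candidate element of $\mathcal{P}(X)$ is determined by its behavior at each $x \in X$ (in or out). Once this is recognized, the subset $D$ is a perfectly well-defined element of $\mathcal{P}(X)$ by straightforward set-builder comprehension, and the self-referential step $y \in f(y) \iff y \notin f(y)$ gives the contradiction without any further assumption on $X$. Combined with the injection in (i), this yields the strict inequality of cardinalities for every set $X$, finite or infinite.
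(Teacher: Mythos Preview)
Your proof is correct and follows essentially the same diagonal argument as the paper: define the set of $x$ with $x\notin f(x)$, find a preimage $y$, and derive the contradiction $y\in f(y)\iff y\notin f(y)$. The only cosmetic differences are that you spell out the injection $x\mapsto\{x\}$ explicitly and assume merely a surjection rather than a bijection, which is slightly cleaner but not a different approach.
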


\begin{proof} Clearly the cardinality of $\mathcal{P}(X)$ is greater than or equal to the cardinality of $X$.  We proceed by contradiction.  Assume that the cardinality of $\mathcal{P}(X)$ is equal to the cardinality of $X$.  Then there must exist a bijection $f: X\to \mathcal{P}(X)$.  Consider the set \[Y=\{x\in X | x\notin f(x)\}.\]
  $Y$ is clearly an element of $\mathcal{P}(X)$ and thus there must exist some $y\in X$ such that $f(y)=Y$.  Is $y\in Y$?  

Suppose not.  Then $y$ is an element of $X$ that does not map to a set containing itself and thus must be in $Y$ by definition.  This brings us to a contradiction.  Suppose $y$ is in $Y$.  Then $y$ is an element of $X$ that does map to a set containing itself and therefore cannot be an element of $Y$ by definition.  We have a contradiction again.  Therefore there is no valid bijection $f$ that maps $X$ to $\mathcal{P}(X)$.
\end{proof}

To use this theorem to obtain the uncountability of $[0,1]$ we first  use the natural identification of subsets of $\N$ with 
infinite sequences of $0$s and $1$s; this establishes a bijection of $\mathcal P(\N)$ with $2^\N$.  Then as before we observe there is a clear identification of  infinite sequences of $0$s and $1$s  with their binary representation in $[0,1]$ (being careful of the fact that the dyadic rationals have two representations).

It is hard to read the proof of Theorem~\ref{T:powercard} without thinking of Russell's Paradox. Suppose $Y$ is a set consisting of all sets that are not elements of themselves, i.e.,
 \[Y=\{X | X\notin X\}.\]
 Then we ask if $Y$ is an element of itself. If $Y\in Y$ then by the definition of $Y$
  it follows that $Y\notin Y$, a contradiction. On the other hand, if $Y\notin Y$ then again by
  the definition of $Y$ we would have that $Y\in Y$, another contradiction. Thus we obtain a contradiction from the definition of this set. Thus $Y$ cannot be a set.  This paradox was published by Russell in 1903 and caused a re-examination of the foundations of set theory. The way it  is now resolved  is by noting that this unrestricted
  way of defining sets (such as the definition of $Y$), called the schema of comprehension, is not valid; $Y$ is a class that is not a set.  There is now a careful  axiomatization for how sets are defined or constructed; one such system is called the Zermelo--Fraenkel axioms.

\section{Game Proofs}

In 2007,  Matthew Baker \cite{Ba2007} published a proof that essentially reconstructs the logic of Theorem \ref{T:cantorfirst} with a game (Baker also discusses other games, such as the Choquet game, which can be used to prove the Baire category theorem).  The game is as follows. A subset $S$ of the unit interval is given.
There are two players, Alice and Bob, who take turns choosing elements of $S$.  Alice goes first and chooses a  number $a_1$ such that $0<a_1<1$.  Then Bob chooses $b_1$ such that $a_1<b_1<1$.  They continue alternating  and on each turn the player chooses a point between the previous two (i.e.,  on Bob's $n^{th}$ turn he selects $b_n$ such that $a_n<b_n<b_{n-1}$; the choices are such that they generate a nested decreasing sequence of closed intervals $[a_n,b_n]$).  From the monotone sequence theorem we see that because $\{a_n\}$ is  increasing and bounded above, it converges to a real number, denoted $\eta$.  Alice wins the game if $\eta$ is in $S$, Bob wins if $\eta$ is not in $S$.

Baker first proves the following theorem. 

\begin{theorem} If $S$ is a countable set, then there is a winning strategy for Bob.
\end{theorem}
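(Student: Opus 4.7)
The plan is to hand Bob an explicit strategy that ``kills'' one element of $S$ per turn. Enumerate $S=\{s_1,s_2,\ldots\}$; Bob's $n$th move $b_n$ will be chosen to guarantee $s_n\neq\eta$, so that after infinitely many turns $\eta\notin S$ and Bob wins.

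The crucial observation is a sharp placement of the limit $\eta=\lim_{k\to\infty}a_k$. Because $\{a_n\}$ is strictly increasing, $\{b_n\}$ is strictly decreasing, and every $a_k$ lies below every $b_m$, we have $a_n<\eta\leq b_n$ for every $n\in\N$ (strict on the left since $a_n<a_{n+1}\leq\eta$; weak on the right since $\eta=\lim a_k\leq b_n$). Thus to ensure $s_n\neq\eta$ on turn $n$, it suffices for Bob to arrange that either $s_n\leq a_n$ or $s_n\geq b_n$.

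Bob's strategy on turn $n$ (with the convention $b_0=1$) is then transparent, by cases on where $s_n$ sits relative to $a_n$ and $b_{n-1}$. If $s_n\leq a_n$, Bob plays any legal $b_n\in(a_n,b_{n-1})$; then $\eta>a_n\geq s_n$. If $a_n<s_n<b_{n-1}$, Bob picks $b_n\in(a_n,s_n)$, for instance the midpoint; then $\eta\leq b_n<s_n$. If $s_n\geq b_{n-1}$, any legal choice automatically satisfies $b_n<b_{n-1}\leq s_n$, and again $\eta\leq b_n<s_n$. In every case $\eta\neq s_n$.

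There is no serious obstacle: the argument is essentially Cantor's first proof (Theorem~\ref{T:cantorfirst}) recast as a game, with Bob cast as the nested-interval constructor and Alice free to push from below. The only bookkeeping point worth flagging is the inequality $\eta\leq b_n$ (not merely $\eta<b_{n-1}$), which is what makes a single move $b_n<s_n$ sufficient to exclude $s_n$ from being the eventual limit.
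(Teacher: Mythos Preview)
Your proof is correct and follows essentially the same approach as the paper: enumerate $S=\{s_n\}$ and have Bob's $n$th move exclude $s_n$ from the eventual limit. The only cosmetic difference is that the paper has Bob play $b_n=s_n$ in the interesting case (relying on the strict inequality $\eta<b_n$, which holds since $\{b_n\}$ is strictly decreasing), whereas you play $b_n<s_n$ and make do with the weaker bound $\eta\le b_n$; both variants work.
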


\begin{proof}  Let $S$ be a countable set. If $S$ is empty, then $\eta$ is not in $S$ and any strategy is a winning strategy. So assume it is nonempty  and write it as $S=\{s_i\}_{i\in\N}$ (we do not assume the terms are distinct). Alice starts and chooses
$0<a_1<1$.    Bob's strategy is to check  if $s_1$ is in the open interval $(a_1,1)$. If so, he chooses $b_1=s_1$, if not he chooses the midpoint of the interval $(a_1,1)$. Then for the second move, if $s_2$ is in the 
interval $(a_2,b_1)$, Bob chooses $b_2=s_2$, and if not he chooses the midpoint of the interval. He continues in this way to generate a sequence of choices $b_n$ such that, independently of Alice's choices,  
for each $n$ in $\N$, the point $s_n$ is outside the open interval $(a_n,b_n)$. Since the sequences $\{a_n\}$
and $\{b_n\}$ are strictly increasing and decreasing, respectively,  for every $n\in\mathbb N$,  $a_n<\alpha<b_n$. Therefore $\eta$ is not in $S$ and Bob has a winning strategy.
\end{proof}

If we let $S$ be the unit interval, Alice is clearly guaranteed to win.  Thus the unit interval is uncountable.

Mathematically this proof is very similar to Cantor's first proof.  Alice chooses the left endpoint of each interval $I_n$ and Bob chooses the right endpoint.  When Alice wins it means she has found $\eta$ which is not in the sequence $\{\omega_k\}$.  However this proof is much easier to visualize than the proof of Theorem \ref{T:cantorfirst}.  This gave us the idea  to write Cantor's other proofs with a game argument.  Here is the diagonalization proof:

The game: Alice and Bob are given a set of numbers $S \subset [0,1]$ in their decimal expansion, and such that the expansion does not end in infinitely many $0$s; for example, $1$ is represented as $0.999\ldots$.  Alice and Bob construct a number $\eta$ in the following way.  Alice first chooses a digit $a_1 $ in $\{0,1,..., 9\}$.  Then Bob chooses $b_1$ in $\{0,1,...,9\}$.  They repeat the process to find the second digit with $a_2$ and $b_2$ and so on.  The number they have constructed at the end of the process is $0.z_1z_2z_3...$ where $z_n=a_n+b_n\mod{10}$.  Alice wins if the resulting number is in $S$, Bob wins otherwise.  

\begin{theorem} If $S$ is a countable set, then there is a winning strategy for Bob in the diagonalization game.
\end{theorem}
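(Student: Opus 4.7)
The plan is to adapt Cantor's classical diagonal construction to the game. Enumerate $S=\{s_1,s_2,\ldots\}$ and write each $s_n=0.s_n^{(1)}s_n^{(2)}\ldots$ in its (by the stated convention unique) non-terminating decimal expansion. Bob's intended strategy is: on turn $n$, after seeing Alice's digit $a_n$, choose $b_n\in\{0,1,\ldots,9\}$ so that $z_n=(a_n+b_n)\bmod 10$ differs from $s_n^{(n)}$. This is always possible because the map $b\mapsto (a_n+b)\bmod 10$ is a bijection of $\{0,\ldots,9\}$, so Bob can realize any digit he wants for $z_n$ and in particular any of the nine digits other than $s_n^{(n)}$.

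The main obstacle is not the diagonal idea itself but the non-uniqueness of decimal expansions. The constructed sequence $0.z_1z_2\ldots$ differs from each $s_n$'s expansion in its $n$-th place, yet this does not immediately force $\eta\neq s_n$ as real numbers: numbers such as $0.3000\ldots=0.2999\ldots$ admit two decimal representations, and the game's convention singles out only one. If Bob's digit string accidentally terminates in trailing $0$s (or trailing $9$s), the other expansion of $\eta$ might agree with some $s_k$, and then $\eta\in S$ despite the digitwise mismatch.

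To circumvent this I would strengthen Bob's rule so that at every turn $z_n\notin\{s_n^{(n)},0,9\}$. Since at most three digits are forbidden and Bob controls the value of $z_n$ completely, at least seven legal choices of $b_n$ remain, so the strengthened strategy is always playable. The resulting number $\eta=0.z_1z_2\ldots$ has every digit in $\{1,2,\ldots,8\}$, so its expansion is neither trailing-$0$ nor trailing-$9$; hence it is the unique canonical representation of $\eta$ and also satisfies the game's convention. If $\eta=s_n$ for some $n$, equality of canonical expansions would force $z_n=s_n^{(n)}$, contradicting Bob's choice on turn $n$. Therefore $\eta\notin S$, so Bob's strategy is a winning strategy, completing the proof.
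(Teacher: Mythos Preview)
Your proof is correct and follows essentially the same diagonal strategy as the paper: Bob uses his control over $z_n=(a_n+b_n)\bmod 10$ to force $z_n\neq s_n^{(n)}$ while simultaneously keeping $z_n$ away from digits that could create a non-canonical expansion. The paper's version forbids only the digit $0$ (choosing $z_n=s_{n,n}+1\bmod 10$, or $+2$ when that would give $0$), which already suffices under the stated convention that expansions do not end in infinitely many $0$s; your additional exclusion of $9$ is harmless overkill but makes the uniqueness discussion cleaner and more self-contained than the paper's.
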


\begin{proof} Assume $S$ is countable and nonempty and write it as $S=\{s_i\}_{i\in\N}$. Suppose the $j$th digit in the decimal expansion of $s_i$ is $s_{i,j}$.     Whatever Alice chooses for $a_n$ Bob picks $b_n$ such that $a_n+b_n=s_{n,n}+1\mod{10}$ if $s_{n,n}+1\neq 0$, and picks  $b_n$ such that $a_n+b_n=s_{n,n}+2\mod{10}$ otherwise.  Thus he guarantees that for each $n\geq 1$, the $n$th digit of  $\eta$ differs from the $n$th digist of $s_n$. Therefore $\eta$ is not in $S$ and Bob wins the game.
\end{proof}

Again, as before, if $S=[0,1]$ then Alice is guaranteed to win.  Therefore the unit interval is uncountable.
We also note that if $S$ is the set of all algebraic numbers in $[0,1]$, then Bob has a strategy to construct
a transcendental number in $[0,1]$.

Games have been studied in mathematics for some time; in particular two player games where the players alternate picking decreasing intervals are of the Banach--Mazur type and have been used to prove
many results. Schmidt in \cite{Sc1966} introduced Banach--Mazur type games where he proved, among other results, a theorem that implies the existence of uncountably many  numbers that are badly approximable by rationals.

\section{Measure Proof}\label{s:measure}


Another interesting approach is to use results from measure theory to prove that the unit interval is uncountable.  Informally we think of the measure of a set as its length.  We define a set $A$ in $\R$ to have {\it measure zero} if for each $\varepsilon >0$ there exists a sequence of open intervals $\{I_j\}_{n\in\N}$ such that 
\[A\subset \bigcup_{j=1}^\infty I_j\text{ and }\sum_{j=1}^{\infty} |I_j|<\varepsilon,\]
where $|I_j|$ denotes the length of interval $I_j$.  Conceptually this tells us that a set has measure zero if we can cover the entire set with a countable union of arbitrarily small intervals.  

We first prove the following:

\begin{lemma} Every countable set has measure zero. \end{lemma}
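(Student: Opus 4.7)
The plan is to enumerate the countable set and cover its $n$th element by an open interval of geometrically decaying length, so that the total length is bounded by a convergent geometric series whose sum is less than $\ve$.

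First I would dispose of the trivial case $A=\emptyset$, where the empty cover has total length $0<\ve$. For nonempty $A$, I would write $A=\{a_n\}_{n\in\N}$; if $A$ happens to be finite, I would pad the enumeration by repeating the last element indefinitely so that the indexing extends to all of $\N$ without changing the underlying set. Given $\ve>0$, I would then define
\[I_n=\left(a_n-\frac{\ve}{2^{n+2}},\ a_n+\frac{\ve}{2^{n+2}}\right),\]
an open interval centered at $a_n$ with $|I_n|=\ve/2^{n+1}$.

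The two conditions in the definition of measure zero are then immediate. Each $a_n$ lies in $I_n$, so $A\subset\bigcup_{n=1}^\infty I_n$, and the total length is
\[\sum_{n=1}^\infty |I_n|=\ve\sum_{n=1}^\infty \frac{1}{2^{n+1}}=\frac{\ve}{2}<\ve.\]

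There is essentially no obstacle here: the argument rests only on the convergence of the geometric series $\sum 2^{-n}=1$. The one mild subtlety is handling finite and infinite countable sets uniformly, which I resolve by allowing repetition in the enumeration; alternatively, one could cover a finite set by a single short interval enclosing all its points.
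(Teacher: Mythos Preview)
Your proof is correct and essentially identical to the paper's own: you use the same intervals $I_n=\bigl(a_n-\ve/2^{n+2},\,a_n+\ve/2^{n+2}\bigr)$ and the same geometric series bound, with the only addition being your explicit handling of the empty and finite cases via padding.
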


\begin{proof} Let $A$ be a nonempty countable set with elements $x_j, j \in \mathbb N$.   
Let $\ve>0$. Define an open cover of $A$ by setting 
\[I_j=\left(x_j-\frac{\varepsilon}{2^{j+2}}, x_j+\frac{\varepsilon}{2^{j+2}}\right).\]  Clearly $|I_j|=\frac{\varepsilon}{2^{j+1}}$ and \begin{align*} \sum_{j=1}^\infty |I_j|&=\sum_{j=1}^{\infty}\frac{\varepsilon}{2^{j+1}}<\varepsilon.\end{align*}  Therefore $A$ has measure zero. \end{proof}

We next prove a lemma which implies that the unit interval does note have measure zero. It says that
if $[0,1]$ (or any closed bounded interval) is covered by a countable union of open intervals, then 
the sum of the lengths of the sequence of intervals is an upper bound for the length of $[0,1]$.
It is intuitively clear but for its proof we will need the version of the Heine--Borel theorem we proved in
Theorem~\ref{specialHeineBorel}.
This lemma can also be found in Borel's development of measure \cite{Bo1950}.

\begin{lemma}\label{L:measure} Suppose $\{I_{j}\}_{j\in\N}$ is a countable  collection of open intervals that covers   $I=[0,1]$:
 \[I\subset\bigcup_{j\in\N} I_j.\] Then \[|I|\leq \sum_{j\in\N}|I_{j}|.\] \end{lemma}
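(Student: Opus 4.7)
The plan is to reduce to the finite case via the Heine--Borel theorem (Theorem~\ref{specialHeineBorel}), and then handle the finite case by constructing a ``chain'' of overlapping intervals whose endpoints march from $0$ to $1$.

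First I would invoke Theorem~\ref{specialHeineBorel} to extract a finite subcollection $I_{n_1},\ldots,I_{n_K}$ of the $\{I_j\}$ that still covers $[0,1]$. Since all lengths are nonnegative,
\[\sum_{k=1}^K |I_{n_k}| \leq \sum_{j\in\N}|I_j|,\]
so it suffices to prove $|I|\leq \sum_{k=1}^K |I_{n_k}|$ for any finite cover of $[0,1]$ by open intervals.

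For the finite case I would construct a chain as in the Heine--Borel proof above. Write the chosen intervals as $(c_k, d_k)$. Let $(c_{i_1}, d_{i_1})$ be an interval of the cover containing $0$. If $d_{i_1} > 1$ we stop; otherwise $d_{i_1} \in [0,1]$, so some interval $(c_{i_2}, d_{i_2})$ of the cover contains $d_{i_1}$, and in particular $c_{i_2} < d_{i_1} < d_{i_2}$. Continuing, because the subcover is finite and the $d_{i_\ell}$ are strictly increasing, after finitely many steps (say $m$) we reach an interval with $d_{i_m} > 1$. Then $c_{i_1} < 0$ and $d_{i_m} > 1$, and $c_{i_{\ell+1}} < d_{i_\ell}$ for all $\ell < m$, so
\[\sum_{\ell=1}^m |I_{n_{i_\ell}}| = \sum_{\ell=1}^m (d_{i_\ell}-c_{i_\ell}) = (d_{i_m}-c_{i_1}) + \sum_{\ell=1}^{m-1}(d_{i_\ell}-c_{i_{\ell+1}}) > 1 - 0 = 1,\]
since each term in the last sum is positive. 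Because this chain uses a subset of the finite subcover, the full sum $\sum_{k=1}^K |I_{n_k}|$ is at least as large, which gives $|I|=1\leq\sum_{j\in\N}|I_j|$.

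The main obstacle is the bookkeeping in the chain construction: one must check that the chain terminates (which follows from the finiteness of the subcover together with strict monotonicity of the $d_{i_\ell}$) and that the telescoping inequality above really uses the overlap condition $c_{i_{\ell+1}}<d_{i_\ell}$ correctly. An alternative to the explicit chain is a short induction on the number of intervals in a finite open cover of a closed bounded interval, but the chain argument has the virtue of paralleling the algorithmic flavor of the Heine--Borel proof already given.
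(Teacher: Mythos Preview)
Your proposal is correct and follows essentially the same route as the paper: invoke Theorem~\ref{specialHeineBorel} to pass to a finite subcover, build a chain of overlapping intervals starting at $0$ and ending past $1$, and telescope the sum of lengths. Your version adds a bit more care about why the chain terminates (strict monotonicity of the right endpoints forces distinct intervals, so finiteness of the subcover ends the process), but the argument is the paper's own.
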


\begin{proof} From Theorem~\ref{specialHeineBorel} we know that there exists a finite subcollection $I_{j_k}, k=1,\ldots\ell,$ that still covers $I$.  

There exists an element of this subcollection, which we rename $(a_1,b_1)$, such that  $a_1<0<b_1$. If $b_1\leq 1$,   there exists an element of this subcollection, which we rename
$(a_2,b_2) $ such that $a_2<b_1<b_2$. Finally we can find an element, which we rename $(a_p,b_p)$,
 with $p\leq\ell$, 
such that $a_p<1<b_p$.
Then,
 \begin{align*} \sum_{i=1}^{k}|I_{j_i}|&\geq (b_{1}-a_{1})+(b_{2}-a_{2})+...+(b_{p}-a_{p}) \\&\geq -a_{1}+(b_{1}-a_{2})+(b_{2}-a_{3})+\cdots+(b_{{p-1}}-a_{p})+b_{p}\\
 &\geq b_p-a_p>1,\end{align*} since each term in parentheses is greater than zero, completing the proof.
\end{proof}

We can now show that the unit interval is uncountable; Stillwell \cite{St2002} attributes this proof to Harnack (1885).

\begin{theorem}\label{T:measure1} If $I=[0,1]$, then $I$ does not have measure zero. \end{theorem}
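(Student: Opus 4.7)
The plan is to prove this by contradiction, leveraging Lemma~\ref{L:measure} which does essentially all of the real work. Suppose, for contradiction, that $[0,1]$ has measure zero. Then by the definition of measure zero, applied with some specific $\varepsilon < 1$ (for instance $\varepsilon = \tfrac{1}{2}$), there must exist a countable collection of open intervals $\{I_j\}_{j\in\N}$ such that
\[
[0,1] \subset \bigcup_{j\in\N} I_j \quad\text{and}\quad \sum_{j=1}^{\infty} |I_j| < \tfrac{1}{2}.
\]

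The key step is then to invoke Lemma~\ref{L:measure}, which tells us that for any countable open cover of $[0,1]$, the sum of the lengths of the covering intervals is bounded below by $|I| = 1$. Combining these two inequalities yields
\[
1 = |I| \leq \sum_{j=1}^{\infty} |I_j| < \tfrac{1}{2},
\]
a contradiction. Therefore $[0,1]$ cannot have measure zero.

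There is essentially no obstacle here, since the lemma has already packaged the substantive content (namely, the use of Heine--Borel to reduce to a finite subcover and telescope the lengths). The only minor care needed is in the choice of $\varepsilon$: it must be chosen strictly less than $1$ so that the inequality from Lemma~\ref{L:measure} actually contradicts the measure-zero hypothesis. Combined with the earlier lemma that every countable set has measure zero, this immediately yields that $[0,1]$ is uncountable, completing the measure-theoretic proof.
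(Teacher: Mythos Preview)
Your proof is correct and essentially identical to the paper's own argument: both pick $\varepsilon = \tfrac{1}{2}$ and invoke Lemma~\ref{L:measure} to see that any countable open cover of $[0,1]$ has total length at least $1 > \tfrac{1}{2}$. The only cosmetic difference is that you phrase it as a proof by contradiction, whereas the paper phrases it directly.
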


\begin{proof}  Consider $\varepsilon=\frac{1}{2}$.  Let $\{I_j\}$ be any collection of open bounded intervals that covers $[0,1]$.  
By Lemma~\ref{L:measure} we know that $\sum_{j\in\N}|I_{j}|\geq |I|=1>\varepsilon$, thus we have that $I=[0,1]$ does not have measure zero.
\end{proof}

This tells us that countable sets have measure zero while the unit interval does not, thus the unit interval cannot be the same size as a countable set and must be uncountable. Notice that in Theorem \ref{T:measure1} we have again used Theorem \ref{specialHeineBorel}.  The same argument works for any
interval $[a,b], a<b,$ in place of $[0,1]$.

In a certain sense measure theory makes the result more apparent.  Once you are convinced that countable sets have measure zero (i.e., that you could cover up every element of a sequence with an arbitrarily small interval), then it seems obvious that it must be a smaller set than the unit interval which clearly has length 1.  Be careful though, the converse is not true.  There exist uncountable sets with measure zero. 
The most interesting example of such a set was  defined by Cantor in 1883 (see \cite[p. 329]{Da1990}) and is called the (middle thirds) Cantor set. It is defined on the unit interval 
by successively removing the middle third open subintervals of the closed intervals that are left.
It turns out that the set that is left is not only nonempty but uncountable and has measure zero and is perfect,  see e.g. \cite{Si2008}.

\section{Cauchy Sequences}

In 1969, B. R. Wenner \cite{We1969} published yet another proof of the uncountability of the unit interval using Cauchy sequences.  This proof is based on the construction of the real numbers from the rationals using Cauchy sequences of rational numbers, which was in fact the construction that Cantor proposed \cite[p. 37]{Da1990}. Recall that a  sequence $\{a_n\}_{n\in\N}$ is said to be a Cauchy sequence if for all $\varepsilon>0$ there exists
a natural number $N$ such that $|a_n-a_m|<\varepsilon$ for all $n, m\geq N$. It is clear that a convergent sequence satisfies the Cauchy property.  A Cauchy sequence of rational numbers ``wants to converge," but sometimes the number it would converge to is not rational.  By identifying the limit of the sequence, which is a real number, with that particular sequence, we can construct the set of real numbers from the Cauchy sequences of the rationals.  We must be careful because there exist different Cauchy sequences that converge to the same number.  To avoid this we define an identification on the set $C$ of all Cauchy sequences
on $\Q$ by the 
equivalence relation $\thicksim$ given  by 
\[\{a_n\}\thicksim \{b_n\}\text{ if and only if  }a_n-b_n\text{ converges to zero.}\]  Then a real number corresponds to an equivalence class
of Cauchy sequences $\langle a_n\rangle$. Informally we can state that $x\in \mathbb R$ is represented by $\{a_n\}$ if and only if $\{a_n\}$ converges to $x$.  We can then write  $\mathbb R=\{\langle a_n\rangle:\{a_n\}\in C\}.$  

Our notation works as follows: $a_i(n)$ denotes the $n^{th}$ term of the $i^{th}$ sequence.

\begin{theorem} The set $\mathbb R$ is uncountable. \end{theorem}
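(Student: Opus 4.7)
The plan is to argue by contradiction, imitating the nested-interval strategy of Theorem~\ref{T:cantorfirst} but working entirely inside $\Q$ so that the argument respects the Cauchy-sequence construction of $\R$. Suppose $\R$ were countable, so that its equivalence classes can be listed as $\langle a_1\rangle, \langle a_2\rangle, \ldots$, each $\{a_i(n)\}_{n\in\N}$ being a Cauchy sequence of rationals. I will build a single Cauchy sequence $\{b_n\}$ of rationals whose equivalence class is distinct from every $\langle a_i\rangle$, giving the required contradiction.

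The sequence $\{b_n\}$ will be produced from a nested family of closed intervals $[p_i, q_i]$ with rational endpoints satisfying $[p_{i+1}, q_{i+1}] \subset (p_i, q_i)$ and $q_i - p_i \to 0$, after which I set $b_i = p_i$. At stage $i$ the extra requirement is that there exist a positive rational $\varepsilon_i$ and an index $N_i$ with $|a_i(n) - x| \geq \varepsilon_i$ for every $n \geq N_i$ and every $x \in [p_i, q_i]$. To produce this, I use that $\{a_i(n)\}$ is Cauchy in $\Q$: given any prescribed rational tolerance, there is $N_i$ such that the tail $\{a_i(n): n \geq N_i\}$ lies inside some rational interval $J$ of diameter smaller than $(q_{i-1}-p_{i-1})/8$. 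Because $J$ occupies less than an eighth of the length of $(p_{i-1}, q_{i-1})$, I can choose a rational subinterval $[p_i, q_i]$ of length $(q_{i-1}-p_{i-1})/8$ lying inside $(p_{i-1}, q_{i-1})$ and separated from $J$ by a positive rational distance, which I take as $\varepsilon_i$; this completes the inductive step.

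Once the construction is finished, $\{b_n\}$ is Cauchy in $\Q$ because $|b_n - b_m| \leq q_{\min(n,m)} - p_{\min(n,m)} \to 0$, so $\langle b\rangle$ represents a genuine real number. For each fixed $i$, nesting forces $b_n \in [p_i, q_i]$ as soon as $n \geq i$, and therefore $|a_i(n) - b_n| \geq \varepsilon_i$ for every $n \geq \max(i, N_i)$. Hence $a_i(n) - b_n$ does not tend to $0$, so $\langle a_i\rangle \neq \langle b\rangle$. Since this holds for every $i$, the assumed enumeration missed $\langle b\rangle$, a contradiction.

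The main obstacle will be the inductive step: I must localize the Cauchy sequence $\{a_i(n)\}$ using only rational data and then separate my chosen rational subinterval from that localization by a \emph{positive rational} gap, all without invoking completeness of $\R$ (which is precisely what the Cauchy-sequence model is constructing). The Cauchy property, read inside $\Q$, supplies exactly the rational localization needed, and elementary arithmetic over $\Q$ then provides the rational buffer $\varepsilon_i$.
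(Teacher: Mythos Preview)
Your proof is correct and follows essentially the same strategy as the paper's (Wenner's) argument: use the Cauchy property to pin the tail of each $\{a_i(n)\}$ inside a small rational window, then build a rational Cauchy sequence that stays a definite rational distance from each such window. The only difference is packaging---you organize the construction via nested rational intervals $[p_i,q_i]$ shrinking by a fixed factor (and set $b_i=p_i$), whereas the paper imposes explicit $2^{-3k}$-type bounds directly on a single sequence $\{b_k\}$; both amount to the same avoidance argument carried out entirely in $\Q$.
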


\begin{proof}  Assume the set of real numbers is countable and therefore we can write $\mathbb R=\{\alpha_k:k\in\mathbb N\}$, where $\alpha_k=\langle a_k(n) \rangle$.  Our strategy is to define a Cauchy sequence from the rational numbers, $b(k)\in \mathbb Q$ such that $\{b\}\notin \langle \alpha_k \rangle$.

We define two sequences inductively.  The first, $\{N_i\}$ for $ i\in \mathbb N$, we take to be a strictly increasing sequence of natural numbers.  We choose $N_1$ such that for $m, n \in \mathbb N, m,n>N_1$ we have \[|a_1(m)-a_1(n)|<\frac{1}{2^{5}}.\]  Assume $N_{k-1}$ to be defined and choose $N_{k}$ such that for all $m,n\geq N_{k}$ we have \[|a_k(m)-a_k(n)|<\frac{1}{2^{3k+2}}.\]

We use $\{N_i\}$ to define our second sequence, $\{b_n\}$.  Choose $b(1) \in \mathbb Q$ such that \[|b(1)-a_{N_1}(1)|\geq \frac{1}{2^4}. \]  Assume $b_{k-1}$ to be defined and choose $b_{k}$ such that the following are satisfied:  \[|b_{k}-b_{k-1}|<\frac{1}{2^{3(k)}} \text{ and } |b_k-a_{N_k}(k)|\geq \frac{1}{2^{3k+1}}.\]  

We now prove that $\{b_k\}$ is Cauchy and is thus an element of $C$.  Fix $\varepsilon>0$ and choose $N$ such that $\frac{1}{2^{3N}}<\varepsilon$.  
Then if $n\geq m\geq N$
 \begin{eqnarray*} |b_m-b_n| &=&|b_m+(b_{m+1}-b_{m+1})+...+(b_{n-1}-b_{n-1})-b_n| \\ &=& |(b_m-b_{m+1})+(b_{m+1}-b_{m+2})+...+(b_{n-1}-b_n)| \\ &\leq & \sum_{i=m}^n|b_i-b_{i+1}| \leq \sum_{i=m}^n \frac{1}{2^{3(i+1)}} \leq \frac{1}{2^{3N}}<\varepsilon \end{eqnarray*} where the second to last line follows from the definition of $\{b\}$ and the last line follows from $m>N$.

We now show that $\{b_n\} \notin \langle a_k(n) \rangle$.  Let $n \geq N_k$.  $\{N_k\}$ is a strictly increasing function of integers, therefore $N_k\geq k$.  It follows that \begin{eqnarray*} |b_n-a_n(k)| &=& |b_n-b_k+b_k-a_{N_k}(k)+a_{N_k}(k)-a_n(k)| \\ &\geq & |b_k-a_{N_k}|-|a_{N_k}(k)-a_n(k)| -|b_n-b_k|\\ &>& \frac{1}{2^{3k+1}}-\frac{1}{2^{3k+2}}-\sum_{i=k+1}^n|b_i-b_{i-1}| \geq \frac{1}{2^{3k+2}}-\sum_{i=k+1}^n \frac{1}{2^{3(i)}} \\ &>& \frac{1}{2^{3k+2}}-\sum_{i=k+1}^\infty \frac{1}{2^{3(i)}} = \frac{1}{2^{3k+2}}-\frac{1}{7}\frac{1}{2^{3k}} \end{eqnarray*} This tells us the sequence $\{a_n(k)-b_n\}$ cannot converge to zero, thus we have an element of $C$ not included in any of the equivalence classes $ \langle a_k(n) \rangle$.  Therefore our initial assumption was false and we cannot index the real numbers by $\mathbb N$.
\end{proof}

This proof essentially shows that however we try to index the set of real numbers with $\mathbb N$ we are left with a postive distance between any two elements of our sequence.  In the interval defined by these two elements we can always find a real number not included in our sequence.  The construction of the reals from Cauchy sequences on the rationals is interesting in its own right, but complicates the proof, making this approach more difficult to follow than that of Cantor's original proof.

\section{Analytic  Proof}

%
%
%

Perhaps the most unusual recent proof is one that Eliahu Levy \cite{Le2009} adapted from a proof by Bourbaki  in \cite{Bo1949}.  This proof is difficult to follow because it introduces many new terms.  We suggest the reader follow it with pencil in hand to keep track of these new terms and how they come into play.

Our goal will be to define a function that relies on the fact that we are assuming the reals can be indexed by the natural numbers, and then show that this function gives rise to a contradiction.  This will then prove that the reals cannot be indexed by the natural numbers and thus must be uncountable. Instead of using the Bolzano--Weierstrass property or the monotone sequence theory, this uses another completeness property of the real numbers,
equivalent to any of these two, that is the least upper bound or supremum property.  The supremum property states that if $S$ is a nonempty set of real numbers that is bounded above, then there exists a number $\alpha$ in $\R$ such that (1) $\alpha \geq x$ for all $x$ in $S$ (i.e., $\alpha$ is an upper bound of $S$), and (2) for each
$\varepsilon>0$ there exists a number $y$ in $S$ such that $y> \alpha-\varepsilon$
 (i.e., $\alpha$ is the least such upper bound). This number is called the supremum of $S$ and written $\sup S$.

\begin{theorem} The set $\mathbb R$ is uncountable.
\end{theorem}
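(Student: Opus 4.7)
The plan is a proof by contradiction, mirroring the strategy announced in the excerpt: assume a bijection $f:\N\to\R$ exists, use $f$ to define an auxiliary object, and apply the supremum property to extract a real number whose construction forbids it from lying in the range of $f$.

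First I would reduce to the unit interval $[0,1]$. The restriction of the enumeration to $\{n : f(n)\in[0,1]\}$ produces a list $y_1,y_2,\ldots$ of $[0,1]$, so it is enough to derive a contradiction from the assumption that $[0,1]$ is enumerated this way. Using the enumeration, I define recursively two sequences $(a_k)$ and $(b_k)$ with $a_k<b_k$ by the following selection rule: $a_{k+1}$ and $b_{k+1}$ are the two elements of $\{y_n\}$ of smallest index that lie strictly between $a_k$ and $b_k$; if at some stage fewer than two such elements exist, the remaining open interval provides a witness directly, a trivial base case. This recursively defined selection rule is the ``function depending on $f$'' referred to in the preamble. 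By construction $(a_k)$ is strictly increasing, $(b_k)$ is strictly decreasing, and $a_j<b_m$ for all $j,m$.

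Now I invoke the supremum property: the set $A=\{a_k : k\in\N\}$ is nonempty and bounded above by every $b_j$, so $\alpha:=\sup A$ exists in $\R$. The supremum characterization gives $a_k\le\alpha$ for all $k$, and since $a_k<a_{k+1}\le\alpha$, in fact $a_k<\alpha$; combining with the observation that $a_j<b_{k+1}<b_k$ for every $j$ sharpens $\alpha\le b_{k+1}$ to $\alpha<b_k$, placing $\alpha$ strictly inside every open interval $(a_k,b_k)$. If $\alpha=y_p$ for some $p$, then $y_p$ is a valid candidate for selection at every stage of the recursion; since the rule always picks the smallest unused index, and at most $2(p-1)$ elements of smaller index can have been selected by stage $p$, the index $p$ must have been consumed, so $\alpha=y_p$ would coincide with some already-selected endpoint $a_j$ or $b_j$, contradicting $a_j<\alpha<b_j$. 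Hence $\alpha\in\R$ lies outside the range of $f$, the desired contradiction.

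The main obstacle will be converting monotone-bounded-sequence reasoning into pure supremum language: the strict inequalities $a_k<\alpha<b_k$ must be extracted directly from the sup characterization rather than smuggled in through monotone convergence, and the final index-tracking argument has to be phrased in terms of the recursive selection function rather than limits. As the excerpt warns, keeping the several layers straight (the enumeration of $\R$, its restriction to $[0,1]$, the recursive selection function, the sequences $(a_k)$ and $(b_k)$, and the extracted supremum $\alpha$) is where the proof becomes notationally dense, which is why the reader is told to keep a pencil handy.
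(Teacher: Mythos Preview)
Your argument is correct (two small items: you never specify the initial interval $(a_0,b_0)$, and the clause ``at most $2(p-1)$ elements of smaller index'' should read ``at most $p-1$''---there are only $p-1$ indices below $p$), but it is not the proof the paper gives here. What you have written is Cantor's 1874 nested-interval argument---Theorem~\ref{T:cantorfirst} of this very paper---with the monotone convergence step traded for a direct appeal to the supremum. That substitution is legitimate, and your index-counting contradiction at the end is exactly the one used in Section~2.

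The paper's proof at this point is Levy's adaptation of Bourbaki, and it is structurally unrelated to nested intervals. The ``function depending on the enumeration'' is not a selection rule but a positive weight: assuming $\R=\{x_n\}_{n\in\N}$, set $f(x_n)=2^{-n}$, so $f(x)>0$ for every real $x$ while $\sum_{x\in F}f(x)\le 1$ for every finite $F\subset\R$. One then defines $m(S)=\sup\bigl\{\sum_{x\in F}f(x):F\subset S\text{ finite}\bigr\}$ and $A=\{x\in\R:x<m(-\infty,x)\}$, takes $c=\sup A$, and reaches a contradiction from the single fact $f(c)>0$: choose $y\in A$ with $y>c-f(c)$; then $m(-\infty,y+f(c))\ge m(-\infty,y)+f(c)>y+f(c)$, which forces $y+f(c)\in A$ despite $y+f(c)>c$. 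There is no reduction to $[0,1]$, no nested intervals, and no index-tracking. Your route is geometrically transparent and essentially already covered earlier in the paper; Levy's is of interest precisely because it dispenses with interval machinery altogether and shows that a measure-like accumulation together with the least-upper-bound axiom already forces uncountability. The ``many new terms'' the preamble warns about are $f$, $m$, $A$, and $c$---not the endpoint sequences you introduced.
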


\begin{proof}

Assume that $\mathbb R$ is countable.  Then we can write $\mathbb R$ as $\{x_n\}_{n\in\N}$.  Define a function $f:\mathbb R \to \mathbb R$ such that for all $x\in \mathbb R, f(x)>0$,  and for any finite subset $F$ in $\R$, the sum \[\sum_{x\in F} f(x)\leq 1.\]  An example of such a function, assuming $\R= \{x_n\}_{n\in\N}$, is $f(x_n)=2^{-n}$ for $n$ in $\N$.  

We need some preliminary definitions.  For each nonempty set   { S} in  $\mathbb R$ define 
\[
{ m(S)} = \sup \left\{\sum_{x\in F}f(x): F\text{ is a finite subset of }S\right\}.\]
From the properties of $f$ it follows that $m(S)$ exists in $\R$  and is bounded above by $1$.
  Finally, define the set \[A= \{x\in\R: x< m(-\infty,x)\}.\]
  Since $A$ nonempty ($0$ is in $A$) and is bounded above by $1$, it follows that it has a supremum
  so we set $c=\sup A$. Letting $\varepsilon=f(c)>0$, we know there exists $y$ in $A$ such that
  \[y>c-f(c).\]
So $c<y+f(c)$ and $y<m(-\infty,y)$.  Also, as $y$ is in $A$, $y\leq c$. 
Now, if $F$ is a finite set in $(-\infty,y+f(c))$, it will consist of elements of $(-\infty,y)$ that do not 
include $c$, plus some elements of $[y,y+f(c))$ that may include $c$. Thus
\[m(-\infty,y+f(c))\geq m(-\infty,y)+f(c)>y+f(c).\]  This means that $y+f(c)$ is in $A$, but as $y+f(c)>c$,  this contradicts that  $c$ is the supremum of $A$.  Therefore our initial assumption is false and $\mathbb R$ is uncountable. \end{proof} 

Levy's proof is unlike any other in this paper.  The use of the supremum shows that there are certain properties of functions defined over uncountable sets that do not transfer to functions over countable sets.  An interesting exercise would be to explore some of the other properties of functions defined over the reals to see if they motivate new proofs.

\section{Conclusion}

The number of proofs of the uncountability of the unit interval demonstrates that this important result has intrigued mathematicians ever since Cantor published his original proof.  The variety of methods people have used illustrates the usefulness of this problem as an introduction to different areas of math.  For instance, for the student new to set theory the game proof or Cantor's diagonalization proof are probably the best approaches to learning about uncountability, but once it is a familiar result it can be the guide to the basics of measure theory,  and analysis.   In this way the uncountability of the reals is important beyond its implications for different sizes of infinity: it links a familiar concept with possibly new ones.  We challenge the reader to extend this result to more fields by finding new proofs that not only enhance our understanding of the structure of the unit interval but also serve as a guide to those areas of mathematics.

\section{Acknowledgements}

This paper started as a project in a independent study course by the first named author supervised by
the second author.
The first named author was supported in part by funds of the Bronfman Science Center at Williams College.
We are indebted to Joe Auslander and Frank Morgan for several comments and in particular to Edward Burger and Steven J.  Miller for a careful reading of the manuscript and several suggestions that improved our paper, and for bringing \cite{Sc1966} to our attention.
We benefited from the translations of Cantor \cite{Ca1874}, \cite{Ca1891} in \cite{Ew1996} and
\cite{St2008}. We would like to thank Bernhard Klingenberg for his help reading Theorem A in
\cite{Ca1884}.

References \cite{Ca1874}, \cite{Ca1884}, \cite{Ca1891} are reprinted in \cite{Ca1932}. Reference 
\cite{Ca1883} is a translation of \cite{Ca1874} and \cite{Ca1883b} is a translation of \cite{Ca1878}. 

\bibliographystyle{plain}
\bibliography{RealRefs}

\end{document}